\newtheorem{theorem}{Theorem}[section]
\newtheorem{corollary}[theorem]{Corollary}
\newtheorem{lemma}[theorem]{Lemma}
\newtheorem{proposition}[theorem]{Proposition}
\newtheorem{remark}[theorem]{Remark}
\def\J#1#2#3{ \left\{ #1,#2,#3 \right\} }
\def\RR{{\mathbb{R}}}
\def\11{\textbf{$1$}}
\begin{document}

\title{Positive definite hermitian mappings associated to tripotent elements}
\date{}

\author[A.M. Peralta]{Antonio M. Peralta}
\email{aperalta@ugr.es}
\address{Departamento de An{\'a}lisis Matem{\'a}tico, Facultad de
Ciencias, Universidad de Granada, 18071 Granada, Spain.}

\thanks{Authors partially supported by the Spanish Ministry of Economy and Competitiveness,
D.G.I. project no. MTM2011-23843, and Junta de Andaluc\'{\i}a grants FQM0199 and
FQM3737.}

\subjclass[2010]{46L70; (17C65; 46L30)}

\keywords{Peirce decomposition, tripotent, positive definite hermitian sesquilinear mappings}

\date{February, 2013}

\maketitle
 \thispagestyle{empty}

\begin{abstract} We give a simple proof of a meaningful result established by Y. Friedman and B. Russo in 1985, whose proof was originally based on strong holomorphic results. We provide a simple proof which is directly deduced from the axioms of JB$^*$-triples with techniques of Functional Analysis.
\end{abstract}

\section{Introduction}

In holomorphic theory, the Riemann mapping theorem states that every simply connected open proper subset of the complex plane is biholomorphically equivalent to the open unit disk. The theory of holomorphic functions of several complex variables is substantially different by many reasons, for example, as noted by Poincaré in the early 1900s, the Riemann mapping theorem fails when the complex plane is replaced by a complex Banach space of higher dimension. Though, a complete holomorphic classification of bounded simply connected domains in arbitrary complex Banach spaces is unattainable, {bounded symmetric domains} in finite dimensions were studied and classified by E. Cartan \cite{Cartan35} using the classification of simple complex Lie algebras, and by M. Koecher \cite{Ko} and O. Loos \cite{Loos} with more recent techniques of Jordan algebras and Jordan triple systems. A domain $\mathcal{D}$ in a complex Banach space $X$ is \emph{symmetric} if for each $a$ in  $\mathcal{D}$ there is a biholomorphic mapping $\Phi_a : \mathcal{D} \to  \mathcal{D}$; with $\Phi_a = \Phi_a^{-1}$, such that $a$ is an isolated fixed point of $\Phi_a$ (cf. \cite{Up} and \cite{Chu2012}). In a groundbreaking contribution, W. Kaup shows, in \cite{Ka83}, the existence of a set of algebraic-geometric-analytic axioms which determine a class of complex Banach spaces, the class of JB$^*$-triples, whose open unit balls are bounded symmetric domains, and every bounded symmetric domain in a complex Banach space is biholomorphically equivalent to the open unit ball of a
JB$^*$-triple; in this way, the category of all bounded symmetric domains with base point is equivalent to the category of JB$^*$-triples (see definitions below).

The dual ``holomorphic''-``geometric-analytic'' nature of JB$^*$-triples allowed different strategies to prove the most significant results in this category of complex Banach spaces. For example, the contractive projection principle asserting that the class of JB$^*$-triples is stable under contractive projections, was independently proved with holomorphic techniques by W. Kaup \cite{Ka84} and L.L. Stacho \cite{Sta82} and with tools of Functional Analysis by Y. Friedman and B. Russo \cite{FriRu87}. Though most of the most significative results in JB$^*$-triple theory have been established with dual holomorphic and analytic techniques, some important structure results remain unproved with techniques of Functional Analysis. An example of the latter is an useful property stated by Y. Friedman and B. Russo in their study of the structure of the predual of a JBW$^*$-triple carried out in \cite[Lemma 1.5]{FriRu}. Before going into details, we recall some background. It follows from the algebraic axioms in the definition of JB$^*$-triples,  each tripotent $e$ (i.e. $e= \J eee$) in a JB$^*$-triple, $E$, induces a \emph{Peirce decomposition} of $E$, $$E= E_{2} (e) \oplus E_{1} (e) \oplus E_0 (e),$$ where for $i=0,1,2$ $E_i (e)$ is the $\frac{i}{2}$ eigenspace of the mapping $L(e,e)(x) = \J eex$. Triple products between elements in Peirce subspaces satisfy the following Peirce multiplication rules:
$\J {E_{i}(e)}{E_{j} (e)}{E_{k} (e)}$ is contained in $E_{i-j+k} (e)$ if $i-j+k \in \{ 0,1,2\}$ and is zero otherwise. In addition, $$\J {E_{2} (e)}{E_{0}(e)}{E} = \J {E_{0} (e)}{E_{2}(e)}{E} =0.$$ It is further known from the axioms that $E_2 (e)$ is a JB$^*$-algebra with product $a\circ_{e} b := \J aeb$ and involution $a^{*_e} := \J eae$ (cf. \cite[\S 3]{Loos}, \cite[\S 19, \S 21]{Up} or \cite[\S 1.2 and Remark 3.2.2]{Chu2012}). Therefore, the mapping $$F_1 : E_1 (e) \times E_1 (e) \to E_2(e)$$ $$(x,y) \mapsto F_1 (x,y) =\J xye,$$ is well defined, continuous and sesquilinear. In \cite[Lemma 1.5]{FriRu}, Friedman and Russo state that $F_1$ also satisfies the following properties: \begin{enumerate}[(a)]\item $F_1$ is hermitian, i.e, $F_1(x,y)^{*_e} = Q(e) F_1 (x,y) = F_1 (y,x)$, for every $x,y\in E_1(e)$;
\item $F_1$ is positive definite: $\Phi (x,x) \geq 0$ in $E_2(e)$ for every $x\in E_1 (e)$, and $F_1 (x,x) =0$ implies $x=0$ in $E_1(e)$. $\hfill\Box$
\end{enumerate} The proof of the last statement in (b) is attributed to H. Upmeier in \cite{Up} (and to O. Loos \cite[10.4]{Loos} in the finite dimensional case). When exploring the last reference, we are led to a deep holomorphic argument which requires a firm background on holomorphic functions and Symmetric Banach Manifolds. Motivated by a question from my colleagues M. Cabrera and A. Rodríguez-Palacios, while they were gathering information for a book in preparation \cite{CaPal}, this note provides a simple proof of the above properties which is directly deduced from the axioms of JB$^*$-triples, and is free of holomorphic theory. We complement the original work of Friedman and Russo providing a proof derived from the axioms with techniques of Functional Analysis.

\section{The results}

A \emph{JB$^*$-triple} is a complex Banach space $E$ equipped with a
triple product $\{\cdot,\cdot,\cdot\}:E\times E\times E\rightarrow
E$ which is linear and symmetric in the outer variables, conjugate
linear in the middle one and satisfies the following conditions:
\begin{enumerate}[(JB$^*$-1)]
\item (Jordan identity) for $a,b,x,y,z$ in $E$,
$$\{a,b,\{x,y,z\}\}=\{\{a,b,x\},y,z\}
-\{x,\{b,a,y\},z\}+\{x,y,\{a,b,z\}\};$$
\item $L(a,a):E\rightarrow E$ is an hermitian
(linear) operator with non-negative spectrum, where $L(a,b)(x)=\{a,b,x\}$ with $a,b,x\in
E$;
\item $\|\{x,x,x\}\|=\|x\|^3$ for all $x\in E$.
\end{enumerate}

Complex vector spaces admitting a triple product satisfying the above Jordan identity are called Jordan triple systems. Examples of JB$^*$-triples include C$^*$-algebras, JB$^*$-algebras and the space of all bounded linear operators between complex Hilbert spaces.

Given an element $a$ in a Jordan triple system $E$, the symbol
$Q(a)$ will denote the conjugate linear mapping on $E$ defined by $Q(a) (b)
:= \J aba.$ It is known that the identity
$$ Q(a) Q(b) Q(a) = Q(Q(a)b),$$ holds for every $a,b\in E$.

It follows from Peirce arithmetic that for each tripotent $e$ in a JB$^*$-triple $E,$ the mapping $$F_1 : E_1 (e) \times E_1 (e) \to E_2(e)$$ $$F_1 (x,y) =\J xye,$$ is well defined, continuous and sesquilinear.

Let us recall some basic facts about numerical range and hermitian operators.
Let $X$ be a Banach space, and $u$ a norm-one element in $X$. The
set of states of $X$ relative to $u$, $D(X,u)$, is defined as the
non empty, convex, and weak*-compact subset of $X^{*}$ given by
$\displaystyle D(X,u) := \{ \phi \in B_{X^{*}} : \phi (u)=1 \}.$ For $x\in X$,
the \emph{numerical range} of $x$ relative to $u$, $V(X,u,x)$, is
defined by $V(X,u,x):= \{ \phi (x) : \phi \in D(X,u) \}$.
It is well known that a bounded linear operator $T$ on a
complex Banach space $X$ is hermitian if and only if $V( BL (X),
I_{X}, T) \subseteq \RR$, where $BL(X)$ denotes the Banach space of
all bounded linear operators on $X$ (compare \cite[Corollary 10.13]{BoDu}).
Furthermore, for each $T$ in $BL(X)$ we have
$V(BL(X),I_{X},T) = \overline{co} \ W(T)$ where $W(T) = \{ x^{*}
(T(x)) : (x,x^{*}) \in \Gamma \},$ and $\Gamma$ is any subset of $\Pi(X):=\{
(x,x^{*}) : x\in {X}, x^{*} \in {X^{*}}, x^{*} (x) =1= \|x\| =\|x^*\| \}$
satisfying that its projection onto the first coordinate is norm
dense in the unit sphere of ${X}$ (see \cite[Theorem 9.3]{BoDu1}).

We are particularly interested in the set $D(E_{2}(e),e)$ of all states of the JB$^*$-algebra $E_2(e)$. It is known that $D(E_{2}(e),e)$ separates the points of $E_2 (e)$, while an element $x\in E_2(e)$ is symmetric (i.e. $x^{*_e} = \J exe = x$) if and only if $\varphi (x) \in \mathbb{R}$ for every $\varphi \in D(E_{2}(e),e)$ (compare \cite[\S 1.2 and \S 3.6]{HanStor}). Fix $x\in E_1(e)$. The axiom (JB$^*$-1) implies that $L(x,x)$ is hermitian with non-negative spectrum, thus $\varphi L(x,x) (e)$ lies in $\mathbb{R}_0^+,$ for every $\varphi\in D(E_{2}(e),e).$ In particular $\J xxe$ is a positive element in $E_2(e)$, for every $x\in E_1(e)$. The polarisation formula $\J xye + \J yxe = \J {x+ y}{x+ y}e -\J xxe -\J yye$ ($x,y\in E_1(e)$), together with the above fact, shows that $\J xye + \J yxe$ is a symmetric element in the JB$^*$-algebra $E_2 (e)$. In other words, the mapping $F_1$ is hermitian and semi-definite positive. A similar argument was applied in \cite[Proposition 1.2]{BarFri} and in \cite[page 412]{PeRo2000}, to show that for every norm-one functional $\varphi\in E^*$ and every norm-one element $z\in E$ (respectively, $\Phi\in D( BL(E),I_{E})$), the mapping $(.\vert .)_{\varphi} : E\times E \to \mathbb{C}$ $(x\vert y)_{\varphi} := \varphi L(x,y)(z)$ (respectively, $(x\vert y)_{\Phi}:=\Phi L(x,y)$) defines a continuous semi-positive sesquilinear form on $E.$

Following standard notation, for each element $a$ in a JB$^*$-triple $E$ we denote $a^{[1]} =
a$ and $a^{[2 n +1]} := \J a{a^{[2n-1]}}a$ $(\forall n\in \mathbb{N})$. It is known that Jordan triples
are power associative, that is, $\J{a^{[k]}}{a^{[l]}}{a^{[m]}}=a^{[k+l+m]}$ (cf. \cite[\S 3.3]{Loos} or \cite[Lemma 1.2.10]{Chu2012} or simply apply the Jordan identity). The element $a$ is called \emph{nilpotent} if $a^{[2n+1]}=0$ for some $n$. A Jordan triple $E$ for which the vanishing of $\J aaa$ implies that $a$ itself vanishes is said to be \emph{anisotropic}. It is easy to check that $E$ is anisotropic if and only if zero is the unique nilpotent element in $E$. Clearly, every JB$^*$-triple is anisotropic.

\begin{lemma}\label{element in E1 and nilpotent} Let $a$ be an element in the Peirce-1 subspace associated to a tripotent, $e,$ in a JB$^*$-triple $E$. Suppose that $\J aae =0$. Then $\J bce =0$ and $ Q(b) Q(c) (e)=0$, for every $b,c$ in the JB$^*$-subtriple of $E$ generated by $a$. In particular\hyphenation{particular}, $\J {a^{[3]}}e{a^{[3]}} =0$.
\end{lemma}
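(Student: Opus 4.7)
\emph{Plan.} I would deduce all three assertions from a single closure property of the set
$U := \{ u \in E_1(e) : F_1(u, u) = 0 \}$. Applying Cauchy--Schwarz to the positive semi-definite sesquilinear form $(x, y) \mapsto \varphi F_1(x, y)$ on $E_1(e)$, for each state $\varphi \in D(E_2(e), e)$, together with the fact that states separate the points of $E_2(e)$, shows that $U$ is a closed complex subspace of $E_1(e)$ and that $u \in U$ if and only if $F_1(u, y) = 0$ for every $y \in E_1(e)$. The hypothesis $\J aae = 0$ places $a \in U$.

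The heart of the argument is the closure property: $\J u v {u'} \in U$ whenever $u, u' \in U$ and $v \in E_1(e)$. For arbitrary $c \in E_1(e)$, expand $F_1(\J u v {u'}, c) = \J e c {\J u v {u'}}$ via the Jordan identity (JB$^*$-1). The two outer terms vanish because $u, u' \in U$, and the middle term reduces to $-\J u w {u'}$ with $w := \J c e v$, which lies in $E_0(e)$ by the Peirce rules. This residue is controlled by a sublemma: \emph{for every $u \in U$, $w \in E_0(e)$ and $u' \in E_1(e)$ one has $\J u w {u'} = 0$.} To prove it, write $u = 2\,\J e e u$ (since $L(e, e)$ acts as $\frac{1}{2}$ on $E_1(e)$), so that $\J u w {u'} = 2\,\J {\J e e u} w {u'}$, and apply the Jordan identity again; Peirce arithmetic gives $\J {u'} w e = 0$ and $\J e e {\J u w {u'}} = \J u w {u'}$, so after rearrangement one obtains $\J u w {u'} = 2\, F_1(u, \J w {u'} e)$. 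This vanishes because $\J w {u'} e \in E_1(e)$ by Peirce and $u \in U$.

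With the closure property in hand, an induction on $n$ yields $a^{[2n+1]} \in U$: the base case is the hypothesis, and the step follows from the power-associative identity $a^{[2n+3]} = \J a a {a^{[2n+1]}}$ together with $a, a^{[2n+1]} \in U$. Since $U$ is closed and linear, and the JB$^*$-subtriple $E_a \subseteq E_1(e)$ generated by $a$ is the closed linear span of the odd powers of $a$, we obtain $E_a \subseteq U$. For $b, c \in E_a$ this yields $\J b c e = F_1(b, c) = 0$ at once. The Peirce rules give $Q(c)(e) = \J c e c \in E_0(e)$, so the sublemma applied with $u = u' = b$ and $w = Q(c)(e)$ gives $Q(b) Q(c)(e) = \J b {Q(c)(e)} b = 0$. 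Finally, the identity $Q(Q(x) y) = Q(x) Q(y) Q(x)$ recorded in the text, applied with $x = y = a$, gives $\J {a^{[3]}} e {a^{[3]}} = Q(a^{[3]})(e) = Q(a)\,Q(a)\,Q(a)(e) = 0$, since the inner $Q(a) Q(a)(e)$ is already the special case $b = c = a$ of the second assertion.

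The main obstacle is the sublemma of the second paragraph. Its proof requires threading the Jordan identity through an element of $E_0(e)$, where $L(e, e)$ acts as zero, in order to bring $e$ into the middle slot and leverage the defining condition of $U$; the Peirce bookkeeping must be done carefully to see which terms cancel and which collapse back to $\J u w {u'}$.
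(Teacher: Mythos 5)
Your proof is correct, and it runs on the same engine as the paper's: the positive semi-definite forms $\varphi F_1(\cdot,\cdot)$ for states $\varphi$ of $E_2(e)$, Cauchy--Schwarz together with the fact that states separate the points of $E_2(e)$, Jordan-identity expansions, an induction over the odd powers $a^{[2n+1]}$, and linearity/continuity to pass to the closed span $E_a$. The organization, however, is genuinely different. The paper proves directly by induction that $\J {a^{[2n+1]}}be=0$ for every $b\in E_a$ (each term of the Jordan expansion dying by the induction hypothesis or by $\J aae=0$), and then disposes of $Q(b)Q(c)(e)$ by a separate identity, $\J b{\J cec}b = -\J ec{\J bcb}+2\J{\J ecb}cb$. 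You instead isolate the sublemma that $\J uw{u'}=0$ whenever $F_1(u,\cdot)\equiv 0$ on $E_1(e)$, $w\in E_0(e)$ and $u'\in E_1(e)$; I checked your rearrangement $\J uw{u'}=2F_1(u,\J w{u'}e)$ and it is right --- the surviving middle term of the relevant Jordan expansion is $-\J e{\J w{u'}e}u$ with $\J w{u'}e\in E_1(e)$, so it is not killed by Peirce arithmetic alone and the hypothesis on $u$ is genuinely needed there, exactly as you note. That one sublemma then delivers both the closure of $U$ under $(u,v,u')\mapsto \J uv{u'}$ (hence $E_a\subseteq U$ by induction and density of the span of odd powers) and, taking $w=Q(c)(e)\in E_0(e)$, the vanishing of $Q(b)Q(c)(e)$, thereby unifying two computations that the paper performs separately. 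Your route costs one extra lemma but yields a slightly more structural statement (a closed subspace $U$ with $\J U{E_0(e)}{E_1(e)}=0$ which absorbs triple products); the paper's route is shorter because it never detours through $E_0(e)$.
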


\begin{proof} Fix an arbitrary state $\psi$ of the JB$^*$-algebra $E_2(e)$. The form $(\cdot \vert \cdot)_{\psi} : E\times E \to \mathbb{C},$ $(x\vert y )_{\psi}:= \psi \J xye$ is sesquilinear and semi-positive with $(a\vert a )_{\psi}=0$. By the Cauchy-Schwarz inequality, $(a\vert x )_{\psi}=(x\vert a )_{\psi}=0,$ for every $x\in E$. Since, $\psi$ was arbitrarily chosen in the set of states of $E_2 (e)$ and $D(E_{2}(e),e)$ separates the points of $E_2 (e)$, we deduce that \begin{equation}\label{eq l21} \J bae =  \J abe= 0,
\end{equation} for every $b$ in $E_1 (e)$ and, in particular, for every $b$ in the JB$^*$-subtriple, $E_a$, generated by $a$ (note that $E_a \subseteq E_1 (e)$).

Fix $b\in E_a$. We consider now products of the form $$\J {a^{[3]}}be = \J aa{\J abe} + \J a{\J aab}e - \J ab{\J aae}$$ $$= \hbox{ (by $(\ref{eq l21})$ and the hypothesis) } = 0.$$ Since $$\J {a^{[2 n+1]}}be = \J aa{\J {a^{[2n-1]}}be} + \J {a^{[2n-1]}}{\J aab}e - \J {a^{[2n-1]}}b{\J aae},$$  it follows by induction that $\J {a^{[2 n-1]}}be = 0$, for every natural $n$. By linearity and continuity we have \begin{equation}\label{eq l22} \J cbe =0,
\end{equation} for every $b,c\in E_a$, which proves the first statement.

For the second statement we consider $b,c\in E_a$. By $(\ref{eq l22})$, $$Q(b) Q(c) (e) = \J b{\J cec}b = - \J ec{\J bcb} + 2 \J {\J ecb}cb = 0.$$ Consequently, $\J {a^{[3]}}e{a^{[3]}} = Q(a^{[3]}) (e) =Q(a) Q(a)^2 (e) =0$, as we wanted.
\end{proof}

\begin{remark}\label{remark 1}{\rm Let $e$ be a tripotent in a JB$^*$-triple $E$ and let $a$ be an element in $E_1 (e)$ with $\J aae =0$. The proof of the above Lemma \ref{element in E1 and nilpotent} actually shows that $\J bce =0$, whenever $b,c $ belong to $E_1 (e)$ and one of them lies in the JB$^*$-subtriple of $E$ generated by $a$.}
\end{remark}

We can give now an elementary proof of the fact that the mapping $F_1$ is positive definite on $E_1 (e)\times E_1 (e)$.

\begin{lemma}\label{l positivity} Let $e$ be a tripotent in a JB$^*$-triple $E$. Then the sesquilinear mapping $F_1 : E_1 (e) \times E_1 (e) \to E_2(e),$ $F_1(a,b)= \J abe,$ is hermitian and positive definite, that is, $F_1 (a,b)^{*_e} = F_1(b,a)$, $F_1 (a,a)\geq 0$ in $E_2 (e),$ and
$F_1 (a,a)=0$ if and only if $a=0$.
\end{lemma}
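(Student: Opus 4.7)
The hermiticity and semi-positivity of $F_1$ have already been established in the paragraph preceding Lemma \ref{element in E1 and nilpotent} (via the polarisation identity together with the fact that $L(x,x)$ is hermitian with non-negative spectrum), so only the positive-definiteness clause remains. My plan is to assume $\J aae=0$ and derive $a=0$ in two short steps that exploit Lemma \ref{element in E1 and nilpotent} and the fundamental identity $Q(Q(a)b)=Q(a)Q(b)Q(a)$ recalled earlier in the section.

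The first step promotes the ``double $Q$-sandwich'' $Q(a)^2(e)=0$, which Lemma \ref{element in E1 and nilpotent} supplies when one takes $b=c=a$, to the ``single $Q$-sandwich'' $Q(a)(e)=\J aea=0$. Setting $w := Q(a)(e) \in E_0(e)$, the fundamental identity gives $Q(w) = Q(Q(a)(e)) = Q(a)\,Q(e)\,Q(a)$, and therefore
\[
\J www \;=\; Q(w)(w) \;=\; Q(a)\,Q(e)\,Q(a)\bigl(Q(a)(e)\bigr) \;=\; Q(a)\,Q(e)\bigl(Q(a)^2(e)\bigr) \;=\; 0.
\]
Since $E$ is anisotropic, this forces $w=0$, i.e.\ $\J aea=0$.

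The second step applies the Jordan identity (JB$^*$-1) to $\J ea{\J aea}$, choosing $x=a$, $y=e$, $z=a$ in the inner triple. The left-hand side is zero by what was just proved. The right-hand side expands as $\J{\J eaa}ea - \J a{\J aee}a + \J ae{\J eaa}$; outer symmetry converts $\J eaa$ into $\J aae=0$ (the standing hypothesis) and $\J aee$ into $\J eea=\tfrac{1}{2}a$ (because $a\in E_1(e)$). The first and third summands on the right then vanish, and the middle one becomes $-\tfrac{1}{2}\J aaa = -\tfrac{1}{2}\,a^{[3]}$. Hence $a^{[3]}=0$, and one more invocation of anisotropy yields $a=0$.

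The only subtle point I anticipate is Step 1: Lemma \ref{element in E1 and nilpotent} does not directly deliver $\J aea=0$, so the short nilpotency argument via the fundamental identity is needed as a bridge. Once that vanishing is in hand, Step 2 is nearly automatic -- three of the four terms in the Jordan-identity expansion are killed by $\J aae=0$ and $\J aea=0$, and the surviving term is precisely $\tfrac{1}{2}a^{[3]}$ up to sign, from which anisotropy closes the proof.
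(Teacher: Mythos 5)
Your proof is correct, and while it follows the same overall strategy as the paper---assume $\J aae=0$, feed Lemma \ref{element in E1 and nilpotent} into the Jordan identity, and conclude by anisotropy---the computation is genuinely different. The paper works directly with $a^{[3]}$: it expands $\frac12 a^{[7]}=\J{a^{[3]}}{\J eea}{a^{[3]}}$ by the Jordan identity and kills both resulting terms using the two conclusions of Lemma \ref{element in E1 and nilpotent} (namely $\J{a^{[3]}}{e}{a^{[3]}}=0$ and $\J{a^{[3]}}{a}{e}=0$), obtaining $a^{[7]}=0$. You instead extract from Lemma \ref{element in E1 and nilpotent} only the single identity $Q(a)^2(e)=0$ and upgrade it, via the fundamental identity $Q(Q(a)e)=Q(a)Q(e)Q(a)$ and anisotropy, to the stronger statement $Q(a)(e)=\J aea=0$; one further application of the Jordan identity then yields $a^{[3]}=0$ directly. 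Both of your Jordan-identity expansions and both invocations of anisotropy check out (in particular $\J aee=\J eea=\frac12 a$ for $a\in E_1(e)$, so the surviving middle term is indeed $-\frac12 a^{[3]}$). Your route is slightly more economical in what it takes from Lemma \ref{element in E1 and nilpotent}---the first conclusion $\J bce=0$ and its inductive proof are bypassed at this stage (though they still underlie the proof of $Q(b)Q(c)(e)=0$)---and it isolates the clean intermediate fact that $\J aae=0$ forces $\J aea=0$, which the paper's argument never makes explicit.
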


\begin{proof} We only have to prove the last statement. To this end, take an element $a$ in $E_1(e)$ with $\J aae =0$. The Jordan identity implies that $$\frac12 a^{[7]} =\frac12 \J {a^{[3]}}a{a^{[3]}} = \J {a^{[3]}}{\J eea}{a^{[3]}}$$ $$ = -  \J ea{\J {a^{[3]}}{e}{a^{[3]}}} + 2 {\J {\J ea{a^{[3]}}}{e}{a^{[3]}}}.$$
By Lemma \ref{element in E1 and nilpotent} and the assumptions, the last two summands in the above identity are zero, therefore $ a^{[7]} =0,$ which gives $a=0$.
\end{proof}

\begin{remark}\label{r Lusin}{\rm An strengthened geometric version of the above Lemma \ref{l positivity} (\cite[Lemma 1.5]{FriRu} was proved in \cite[Proposition 2.4]{BuFerMarPe}, where it is established that for each tripotent $e$ in a JB$^*$-triple $E$ and every $x\in E_1(e)\cup E_2 (e)$ we have $\|x\|^2 \leq 4 \|\J xxe\|.$ However, the proof of this result makes use of the original result by Friedman and Russo and subsequent strong structure results proved by the same authors in \cite{FriRu86}.}
\end{remark}

We shall finish with an observation. The above Lemma \ref{l positivity} was established for finite dimensional JB$^*$-triples by O. Loos in \cite[10.4]{Loos}. In such case, the proof is based on the following fact: for every finite dimensional JB$^*$-triple $V$ there exists an $Aut(V)$-invariant positive definite hermitian scalar product $(.\vert .) : V \times V \to \mathbb{C}$ such that $(L(x,y) (a) \vert b) = (a \vert L(b,a) (c))$ for every $a,b,x,y \in V$ (cf. \cite[Proposition 3.4 and Lemma 10.2]{Loos}, see also \cite[\S 1.2]{Chu2012}). 
Although, for an infinite dimensional JB$^*$-triple $E$, the existence of positive definite hermitian scalar product on $E$ satisfying the above condition seems to be hopeless, we shall give an argument to guarantee its existence at least locally.

For a general JB$^*$-triple $E,$ T. Barton and Y. Friedman showed, in \cite[Proposition 1.2]{BarFri}, the abundance of semi-positive sesquilinear forms on $E$. This fact will follow as a consequence of our last result.

Let $c$ be an element in a JB$^*$-triple $E$. Applying the Commutative Gelfand Theory for JB$^*$-triples (see \cite[\S 1]{Ka83}), the JB$^*$-subtriple, $E_c$, of $E$ generated by $c$ is JB$^*$-isomorphic to $C_0(S)$ for some locally compact Hausdorff space $S\subseteq (0,\|c\|]$, such that $S\cup \{0\}$ is compact. It is also known that there exists a triple isomorphism $\Psi$ from $E_c$ onto $C_{0}(S),$ such that $\Psi (c) (t) = t$ $(t\in S)$ (compare \cite[Lemma 1.14]{Ka83}). 

\begin{proposition}\label{p Royden} Let $a$ be a norm-one element in a JB$^*$-triple $E$ and let $\varphi$ be a norm-one functional satisfying $\varphi (a) = 1$. Then the restriction of $\varphi$ to the JB$^*$-subtriple of $E$ generated by $a$ is a triple homomorphism.
\end{proposition}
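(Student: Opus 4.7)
The plan is to reduce to the commutative setting via the Gelfand representation recalled just before the statement, and then to apply the Riesz--Markov representation theorem for complex Radon measures together with a short extremal argument.

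First I would transport the problem to $C_0(S)$. By the Commutative Gelfand theory already cited, there is a triple isomorphism $\Psi : E_a \to C_0(S)$ with $\Psi(a)(t) = t$, where $S \subseteq (0,1]$ and $S \cup \{0\}$ is compact. Since $\|a\| = 1$ equals the supremum of the coordinate function $\iota(t) = t$ on $S$ and $S \cup \{0\}$ is closed in $\RR$, we must have $1 \in S$. Set $\phi := \varphi \circ \Psi^{-1}$; this is a norm-one linear functional on $C_0(S)$ satisfying $\phi(\iota) = 1$.

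Next I would invoke Riesz--Markov to represent $\phi$ by a complex Radon measure $\mu$ on $S$ with $|\mu|(S) = 1$. The chain
$$ 1 = \left| \int_{S} t\, d\mu(t) \right| \leq \int_{S} t\, d|\mu|(t) \leq \int_{S} 1\, d|\mu|(t) = 1 $$
forces both inequalities to be equalities. Using the polar decomposition $d\mu = h\, d|\mu|$ with $|h|=1$, the first equality combined with $t>0$ on $S$ forces $h \equiv 1$ on the support of $|\mu|$, so $\mu$ is positive. The second equality forces $|\mu|$ to be concentrated on the zero set of $1-t$ within $S$, namely $\{1\}$. Together with $\|\mu\|=1$, this gives $\mu = \delta_1$, hence $\phi(f) = f(1)$ for every $f \in C_0(S)$.

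Finally, point evaluation at $1$ is a triple homomorphism from $C_0(S)$ (whose triple product is $\{f,g,h\}(t) = f(t)\overline{g(t)}h(t)$) into $\CC$, so $\varphi|_{E_a} = \phi \circ \Psi$ is a composition of triple homomorphisms and therefore itself a triple homomorphism. The only delicate point I anticipate is the extremal measure-theoretic step, namely extracting both positivity of $\mu$ and concentration at $t=1$ from the two equality cases in the chain above; everything else is bookkeeping.
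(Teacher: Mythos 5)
Your proof is correct, and its overall strategy coincides with the paper's: identify $E_a$ with $C_0(S)$ via the commutative Gelfand theory, show that the restricted functional is the point evaluation $\delta_1$, and observe that $\delta_1$ is a triple homomorphism. The one genuine difference lies in how you prove $\varphi|_{E_a}=\delta_1$: the paper cites a representation theorem of H.L.\ Royden, which directly supplies a non-negative measure $\mu$ with $\varphi|_{E_a}(g)=\int g\,\overline{a}\,d\mu$ and $|a|\equiv 1$ on $\mathrm{supp}(\mu)$, whence the support is $\{1\}$; you instead take a plain complex Radon measure from Riesz--Markov and extract positivity and concentration at $t=1$ by hand from the equality cases in $1=\bigl|\int_S t\,d\mu\bigr|\leq \int_S t\,d|\mu|\leq |\mu|(S)=1$, using the polar decomposition $d\mu=h\,d|\mu|$ and the fact that $t>0$ on $S$. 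Your extremal argument is sound (taking real parts gives $\int t(1-\mathrm{Re}\,h)\,d|\mu|=0$, hence $h\equiv 1$ a.e., and then $\int(1-t)\,d|\mu|=0$ forces $|\mu|(\{1\})=1$), and it buys self-containedness: the measure-theoretic lemma is proved rather than quoted, at the cost of a slightly longer write-up. Your justification that $1\in S$ (the supremum of $S$ lies in the compact set $S\cup\{0\}$ and is nonzero) is also correct and is a detail the paper simply asserts.
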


\begin{proof} Let $E_a$ denote the JB$^*$-subtriple of $E$ generated by $a$. We can identify $E_a$ with $C_{0}(S),$ where $S$ is a locally compact Hausdorff space contained in $(0,1]$, $1\in S$ and $S\cup \{0\}$ is compact. Further, under this identification we can assume that $a$ identifies with the mapping $t\mapsto t$ ($t\in S$). Let $\psi = \varphi|_{E_a}.$ Clearly, $\|\psi\| = 1 = \psi (a).$ By a result of H.L. Royden (see \cite{Royden}), there exists a non-negative (regular) Baire measure $\mu$ on $S\cup \{0\}$ with $\mu (S\cup \{0\}) =1$, such that $|a|\equiv 1$ on the closed support of $\mu$ and $\psi (g) = \int_{S\cup \{0\}} g(t) \overline{a(t)} d\mu(t)$ ($g\in C_{0}(S)$). Clearly, $\psi$ coincides with the functional $\delta_{1} (g) = g(1)$  ($g\in C_{0}(S)$), and the statement follows.
\end{proof}

\begin{corollary}\label{c Royden} Let $a$ be a norm-one element in a JB$^*$-triple $E$. For each norm-one functional satisfying $\varphi$ in $E^*$ with $\varphi (a) = 1$, the prehilbertian seminorm $(x\vert y)_{\varphi} =\J xya$ {\rm(}$x,y\in E${\rm )} satisfies $(a\vert a)_{\varphi} =1.$ $\hfill\Box$
\end{corollary}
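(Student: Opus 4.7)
The plan is to obtain the corollary as an immediate consequence of Proposition \ref{p Royden}. First I would unpack the notation: following the paragraph preceding Lemma \ref{element in E1 and nilpotent}, the sesquilinear form is understood as $(x\vert y)_{\varphi} := \varphi L(x,y)(a) = \varphi \J xya$, so the claim $(a\vert a)_\varphi = 1$ reduces to showing $\varphi(a^{[3]}) = \varphi \J aaa = 1$. Crucially, $a^{[3]}$ belongs to the JB$^*$-subtriple $E_a$ generated by $a$, so the whole computation takes place inside $E_a$.

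The key step is to apply Proposition \ref{p Royden}, which asserts that $\psi := \varphi|_{E_a}$ is a triple homomorphism into $\mathbb{C}$, viewed as a JB$^*$-triple under the canonical product $\J {z_1}{z_2}{z_3} = z_1 \overline{z_2} z_3$. Evaluating at $x = y = z = a$ and using $\psi(a) = \varphi(a) = 1$ would then yield
\[
(a\vert a)_{\varphi} = \psi \J aaa = \J {\psi(a)}{\psi(a)}{\psi(a)} = 1 \cdot \overline{1} \cdot 1 = 1.
\]
Equivalently, using the Gelfand representation recalled just before Proposition \ref{p Royden}, one identifies $E_a$ with $C_0(S)$ so that $a$ corresponds to $t\mapsto t$ and hence $a^{[3]}$ corresponds to $t \mapsto t^3$; the proof of Proposition \ref{p Royden} identifies $\psi$ with evaluation at $1 \in S$, yielding $\psi(a^{[3]}) = 1$ directly.

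I do not foresee any genuine obstacle: the corollary is essentially a one-line consequence of the preceding proposition, once one has noted that $a^{[3]}$ lies in $E_a$ so that Proposition \ref{p Royden} is applicable. The only point that deserves a moment of care is confirming that \emph{triple homomorphism} in Proposition \ref{p Royden} is intended with respect to the standard JB$^*$-triple structure on $\mathbb{C}$, which the explicit $C_0(S)$ realisation makes transparent.
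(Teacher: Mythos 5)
Your proposal is correct and matches the paper's intent exactly: the corollary is stated with no written proof precisely because it follows immediately from Proposition \ref{p Royden} in the way you describe, namely $\varphi\J aaa = \varphi(a)\overline{\varphi(a)}\varphi(a)=1$ since $\varphi|_{E_a}$ is a triple homomorphism (equivalently, evaluation at $1\in S$ under the Gelfand identification). You also correctly repair the small typographical slip in the statement, reading $(x\vert y)_\varphi$ as $\varphi\J xya$.
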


\bigskip
\bigskip
\bigskip
\bigskip
\bigskip
\bigskip

\medskip

\noindent Departamento de An\'{a}lisis Matem\'{a}tico, Facultad de
Ciencias,\\ Universidad de Granada, 18071 Granada, Spain. \medskip

\noindent e-mail: aperalta@ugr.es

\end{document}